\newtheorem{theorem}{Theorem} 
\newtheorem{lemma}[theorem]{Lemma}
\theoremstyle{remark}
\newtheorem{remark}[theorem]{Remark}
\date{}
\author{Giacomo~Sbrana\footnote{Rouen Business School. 1, rue du Mar{\'
e}chal Juin, 76130 Mont-Saint-Aignan, France. E-mail \texttt{gsb@rouenbs.fr}} {} and Federico~Poloni\footnote{Dipartimento di Informatica, Universit\`a di Pisa. Largo Pontecorvo 3, 56127 Pisa, Italy. E-mail \texttt{fpoloni@di.unipi.it}}}
\title{A closed-form estimator for the multivariate GARCH(1,1) model}
\DeclareMathOperator{\vech}{vech}
\DeclareMathOperator{\vecc}{vec}
\renewcommand{\vec}{\vecc}
\DeclareMathOperator{\diag}{diag}
\DeclareMathOperator{\Cov}{Cov}
\DeclareMathOperator{\im}{im}
\newcommand{\norm}[1]{\left\Vert #1\right\Vert}
\newcommand{\abs}[1]{\left\vert #1\right\vert}
\newcommand{\m}[1]{\begin{bmatrix} #1 \end{bmatrix}}
\begin{document}
\maketitle
\begin{abstract}
We provide a closed-form estimator based on the VARMA representation for the unrestricted multivariate GARCH(1,1). We show that all parameters can be derived using basic linear algebra tools. We show that the estimator is consistent and asymptotically normal distributed. Our results allow also to derive a closed form for the parameters in the context of temporal aggregation of multivariate GARCH(1,1) by solving the equations as in \cite{H2}.

\paragraph{Keywords:} Multivariate GARCH(1,1), VARMA, Temporal Aggregation, Estimation.
\end{abstract}

\section{Introduction}
Estimating a multivariate GARCH(1,1) model is a challenging task. The most common tool for this purpose is the quasi maximum likelihood (QML) estimator which requires rather sophisticated optimization techniques. 
In this paper we present a simple and fast method of moments which makes the estimation of the multivariate GARCH(1,1) model more accessible. Our results represent the multivariate generalization of the analytical results already achieved by \cite{LK} for the scalar case.

Our estimator is consistent and, under additional assumptions on the moments, asymptotically normal distributed. Due to the difficulties in estimating multivariate GARCH(1,1) models our estimator may then be used to provide a consistent initial estimate when implementing numerical optimization techniques for the QML estimation. This is especially true when large-scale models are employed.

Several restricted models have been proposed by the previous literature in order to reduce the number of parameters, such as Diagonal VEC (\cite{BEW}), BEKK-GARCH (\cite{EK}), CCC-GARCH (\cite{B}). Interestingly, our results are valid in general. Therefore in the framework we stick to the unrestricted multivariate GARCH(1,1).

Finally, our results extend the results of \cite{H2} in the context of temporal aggregation of multivariate GARCH(1,1). Indeed, our results allow to derive the parameters of the temporally aggregated GARCH for any aggregation frequency. In other words, given the parameters of the disaggregated process, those of the aggregate one are analytical functions of the disaggregate parameters. Alternatively, one can also use the moments of the disaggregated GARCH to produce an initial estimate of the parameters of temporally aggregated processes. The former estimator is again consistent and asymptotically normal when some moments conditions hold.

\section{Framework}

Consider the following unrestricted multivariate GARCH(1,1) model
\begin{align*}
y_t=& H_t^{1/2}  \epsilon_t, \quad t=1,2,\dots,n,
\end{align*}
where $y_t$ is a $d$-dimensional zero-mean, serially uncorrelated process. In addition, we have that $\epsilon_t\in\mathbb{R}^{d\times 1}$ is an i.i.d. white noise vector with zero mean and variance $I_d$.
Moreover, the conditional covariance matrix is given by
\begin{align}\label{garchrec}
\vech{(H_t)}=& c+A\vech(y_{t-1} y_{t-1}^T)+B\vech(H_{t-1}), \quad t=2,3,\dots,n,
\end{align}
where $\vech(M)$ represents the operator that stacks the elements of the lower triangular part of a symmetric matrix $M$ to form a $\bar{d}\times 1$ vector, with $\bar{d}=\frac{d(d+1)}{2}$.

In what follows we make the following assumptions:
\begin{enumerate}
\item $H_t$ is positive definite almost surely for each $t$.
\item All eigenvalues of the matrix $A+B$ have modulus smaller than one. 
\item The process $y_t$ is ergodic, $\beta$-mixing, and strictly stationary. 
\item \label{moments} The fourth moments of $y_t$ exist and are finite.
\end{enumerate}

\cite{Bou06} provides sufficient conditions that ensure a strictly stationary, ergodic and $\beta$-mixing solution of the vector GARCH process (these can also be found in \cite[Theorem~11.5]{FZ}).

The following stronger assumption is used only in some central limit results:
\begin{enumerate}
 \item[5] The eighth moments of $y_t$ exist and are finite.
\end{enumerate}
When the distribution of $\epsilon_t$ is spherical, \cite[Theorem~3]{H1} has given an algebraic condition equivalent to Assumption~\ref{moments} that is easy to test in practice. However, we do not need to assume sphericity here.

The VARMA(1,1) representation of the multivariate GARCH(1,1) is obtained by defining $h_t\equiv\vech(H_t)$, $x_t\equiv \vech(y_{t} y_{t}^T)$ and $\xi_{t}\equiv x_{t}-h_t$; the recurrence relation \eqref{garchrec} is equivalent to
\begin{align*}
x_{t}= c + \Phi x_{t-1} + \xi_{t} - B \xi_{t-1}.
\end{align*}
By eliminating $x_{t-1}$ recursively, we find that asymptotically the following formula holds
\begin{align*}
x_{t}= h + \sum_{i=0}^{\infty}\Theta_{i} \xi_{t-i},
\end{align*}
where $\Phi=A+B$; $h=\vech(H)=(I-\Phi)^{-1}c$;  $\Theta_{0}=I_{d}$ and $\Theta_{i}=(A+B)^{i-1}A$ for $i \geq 1$.
The interest of this formulation lies in the fact that $\xi_t$ is a martingale difference sequence. We define $\Sigma \equiv E[\xi_{t}\xi_{t}^{T}]$; note that $E[\xi_{t}\xi_{t+s}^{T}]=0$ for $s \geq 1$.

\section{Closed form Estimation}\label{sec:closedForm}
Under Assumptions 1--4, the autocovariances of $x_t$ exist and are finite, and they are given by
\[
 M_{k} =E[(x_{t+k}-h)(x_{t}-h)^{T}] =\sum_{i=0}^{\infty}\Theta_{i+k}\Sigma \Theta_{i}^T.
\]
From the VARMA(1,1) representation using the standard Yule-Walker results we have that
\begin{displaymath}
M_{k+1} =\Phi M_{k}, \quad \text{for all $k\geq1$},
\end{displaymath}
thus $\Phi$ can be obtained analytically as
\begin{equation}\label{Phi}
\Phi=M_{k+1}M_{k}^{-1},\quad \text{for all $k\geq1$.}
\end{equation}

These results are well known and can also be found for example in the book of \cite{R} as well as in \cite{H1} page 32 (for the univariate case see \cite{LK}). Therefore, we can estimate $\hat{\Phi}=\hat{M}_{2} \hat{M}_{1}^{-1}$. Consider now the first-order moving average vector 
\[
j_{t}=x_{t}-\Phi x_{t-1} =  c + \xi_{t} - B \xi_{t-1}.
\]
The autocovariances of $j_t$ are
\begin{equation}\label{Gammas}
\begin{aligned}
\Gamma_0 &\equiv E[(j_{t}-c)(j_{t}-c)^{T}]= \Sigma+B\Sigma B^T=M_{0}-M_{1}\Phi^{T}-\Phi M_{1}^{T}+\Phi M_{0}\Phi^{T},\\
\Gamma_1 &\equiv E[(j_{t}-c)(j_{t-1}-c)^{T}]=-B \Sigma=M_{1}-M_{2}\Phi^{T}-\Phi M_{0}+\Phi M_{1}\Phi^{T}=M_1-\Phi M_0.
\end{aligned}
\end{equation}

We can combine the former two equations with simple manipulations to derive two separate equations for $B$ and $\Sigma$
\begin{gather}
 \Gamma_1^T+\Gamma_0 B^T +\Gamma_1 (B^T)^2=0, \label{pme}\\
 \Gamma_0=\Sigma+\Gamma_1\Sigma^{-1}\Gamma_1^T. \label{nme}
\end{gather}



In the scalar case, \eqref{pme} is a quadratic equation; the approach suggested by \cite{LK} consists essentially in deriving an estimator by solving this equation.

This method, however, need not be restricted to the univariate GARCH. In the multivariate case, basic linear algebra techniques can be used to derive a closed form in terms of eigenvalues and eigenvectors. We present them in the next section.

\subsection{Closed formula for $B$}
The following procedure can be used to obtain $B$ as analytical function of $\Gamma_0$ and $\Gamma_1$.
\begin{enumerate}
 \item Form the $2\bar{d} \times 2\bar{d}$ matrix
\begin{align}\label{defP}
P=
 \begin{bmatrix}
 0 & I\\
 -\Gamma_1^{-1}\Gamma_1^T & -\Gamma_1^{-1}\Gamma_0
\end{bmatrix}.
\end{align}
\item One can prove (Lemma~\ref{pairinglemma} in the following) that the eigenvalues of $P$ come in pairs $(\lambda,1/\lambda)$. Therefore, unless there are eigenvalues that lie exactly on the unit circle, half of the $2\bar{d}$ eigenvalues satisfy $\abs{\lambda}<1$, and we may reorder them so that $\abs{\lambda_i} < 1$ for $i=1,2,\dots,\bar{d}$. Moreover, consider the associated eigenvectors $w_i$ and partition them as
\[
 w_i=\m{u_i\\v_i}, \quad u_i,v_i\in\mathbb{R}^{\bar{d}}.
\]
\item Now, a solution to the matrix equation \eqref{pme} is given by
\begin{equation}\label{B}
B=(U^T)^{-1} D U^T = \m{u_1 & u_2 & \cdots & u_{\bar{d}}}^{-T} \m{\lambda_1 \\ & \lambda_2 \\ & & \ddots \\ & & & \lambda_{\bar{d}}} \m{u_1 & u_2 & \cdots & u_{\bar{d}}}^T.
\end{equation}
\end{enumerate}
In Section~\ref{sec:linearAlgebra} and Appendix~\ref{secproofs}, we recall the theoretical results in linear algebra that ensure the functioning of this procedure.

\subsection{Estimation using the closed formula}
Using this formula, an estimation procedure can be derived as following:
\begin{enumerate}
 \item Given the data, compute the observed average and the first three autocovariances of $x_t$:
 \begin{align*}
  \hat{h}&=\frac{1}{n}\sum_{t=1}^{n}x_{t}, & \hat{M}_0&=\frac{1}{n}\sum_{t=1}^{n}[(x_{t}-\hat{h})(x_{t}-\hat{h})^T],\\
  \hat{M}_1&=\frac{1}{(n-1)}\sum_{t=1}^{n-1}[(x_{t+1}-\hat{h})(x_{t}-\hat{h})^T], & \hat{M}_2&=\frac{1}{(n-2)}\sum_{t=1}^{n-2}[(x_{t+2}-\hat{h})(x_{t}-\hat{h})^T].
 \end{align*}
\item Evaluate $\hat{\Phi}=\hat{M}_2 \hat{M}_1^{-1}$, $\hat{\Gamma}_0= \hat{M}_{0}-\hat{M}_{1}\hat{\Phi}^{T}-\hat{\Phi} \hat{M}_{1}^{T}+\hat{\Phi} \hat{M}_{0}\hat{\Phi}^{T},$ and $\hat{\Gamma}_1=\hat{M}_1-\hat{\Phi} \hat{M}_0$, as provided by \eqref{Gammas}.
 \item Use the above procedure based on eigenvalue computation to get an estimated $\hat{B}$.
 \item Finally, recover the other two parameters as $\hat{A}=\hat{\Phi}-\hat{B}$ and $\hat{c}=(I-\hat{A}-\hat{B})\hat{h}$.
\end{enumerate}

\subsection{Asymptotic properties}
In this section and the following, the symbols $\stackrel{\text{p}}{\rightarrow}$ and $\stackrel{\text{dist}}{\rightarrow}$ stand for convergence in probability and distribution, respectively.

The consistency of the QML estimator has been shown by \cite{J} while \cite{CL} provides the asymptotic normality of the QML estimator in the context of BEKK formulation. However, as noted in \cite{BLR}: \emph{The asymptotic properties of ML and QML estimators in multivariate GARCH models are not yet firmly established, and are difficult to derive from low level assumptions [...] Asymptotic normality of the QMLE is not established generally. [...] Researchers who use MGARCH models have generally proceeded as if asymptotic normality holds in all cases.}
Here we provide the asymptotic properties of our closed-form estimator $\Lambda=(c,\vec(A),\vec(B))$, which is function of the moments of $x_t$ only, that is, $\Lambda=F(h , M_{0} , M_{1}, M_{2})$. 

\begin{lemma}\label{Convergence}
Let 
\[
\mathfrak{m} = \m{h\\\vec M_0 \\ \vec M_1 \\ \vec M_2}. 
\]
Under Assumptions~1--4, $\hat{\mathfrak{m}} \stackrel{\text{p}}{\rightarrow}\mathfrak{m}$. In addition, if Assumption~5 holds, we have
\begin{align}
\sqrt{n}(\hat{\mathfrak{m}}-\mathfrak{m}) \stackrel{\text{dist}}{\rightarrow} N(0,\Psi),
\end{align}
where $\Psi=\Cov[\mathfrak{m}]$.
\end{lemma}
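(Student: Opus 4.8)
The plan is to treat $\hat{\mathfrak m}$ as a smooth transformation of a vector of genuine sample averages, prove a law of large numbers and a central limit theorem for that vector, and then transfer both conclusions to $\hat{\mathfrak m}$ by the continuous mapping theorem and the delta method. For consistency, first observe that $x_t=\vech(y_ty_t^T)$ is a fixed measurable function of $y_t$, so by Assumption~3 the sequence $\{x_t\}$ is strictly stationary and ergodic, and the same holds for the windowed products $x_{t+k}x_t^T$, which are measurable functions of $(y_t,\dots,y_{t+k})$. Since the entries of $x_t$ are quadratic in $y_t$, Assumption~4 gives $E\|x_t\|^2<\infty$, so $h$ and the $M_k$ are finite and the relevant averages are integrable. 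Birkhoff's ergodic theorem then yields $\hat h\to h$ and $\frac1n\sum_t x_{t+k}x_t^T\to E[x_{t+k}x_t^T]$ almost surely. Expanding $(x_{t+k}-\hat h)(x_t-\hat h)^T$ writes each $\hat M_k$ as a continuous function of $\hat h$ and these ergodic averages, while the $1/(n-k)$ normalisation and the finitely many endpoint terms differ from the $1/n$ sums only by $o(1)$; the continuous mapping theorem then gives $\hat M_k\to E[x_{t+k}x_t^T]-hh^T=M_k$, hence $\hat{\mathfrak m}\stackrel{\text{p}}{\to}\mathfrak m$.

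For asymptotic normality, collect the raw moments into
\[
 g_t=\m{x_t\\ \vec(x_tx_t^T)\\ \vec(x_{t+1}x_t^T)\\ \vec(x_{t+2}x_t^T)},
\]
a measurable function of the finite window $(y_t,y_{t+1},y_{t+2})$. As such, $\{g_t\}$ is strictly stationary and ergodic and inherits $\beta$-mixing from $\{y_t\}$, its mixing coefficients being bounded by the shifted coefficients of the original process. Each entry of $g_t$ is a polynomial of degree at most four in the $y$'s, so the entries of $g_tg_t^T$ have degree at most eight; Assumption~5 therefore guarantees $E\|g_t\|^2<\infty$. Applying a central limit theorem for strictly stationary $\beta$-mixing sequences with finite second moments — valid here because the GARCH recursion delivers a geometric mixing rate through \cite{Bou06} and \cite[Theorem~11.5]{FZ} — gives $\sqrt n(\bar g-E[g_t])\stackrel{\text{dist}}{\to}N(0,\Omega)$, where $\Omega=\sum_{k\in\mathbb Z}\Cov(g_0,g_k)$ is the long-run covariance.

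Finally I would write $\mathfrak m=\varphi(E[g_t])$ and $\hat{\mathfrak m}=\varphi(\bar g)+r_n$, where $\varphi$ is the smooth (quadratic) map that leaves the first block unchanged and subtracts $\vec(hh^T)$ from each cross-moment block (using stationarity, $E[x_{t+k}]=h$ for all $k$), and $r_n$ gathers the normalisation and endpoint discrepancies with $\sqrt n\,r_n\stackrel{\text{p}}{\to}0$. The delta method then yields $\sqrt n(\hat{\mathfrak m}-\mathfrak m)\stackrel{\text{dist}}{\to}N(0,J\Omega J^T)$ with $J=\nabla\varphi(E[g_t])$, and $\Psi$ is identified with this long-run covariance $J\Omega J^T$. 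The main obstacle is the central limit step: since Assumption~5 delivers only finite variance (not a $2+\delta$ moment) for $g_t$, one must invoke a mixing CLT that operates under exactly finite second moments, which is precisely where the geometric $\beta$-mixing rate of the GARCH process is essential; verifying that the plug-in of $\hat h$ for $h$ together with the normalisation terms are genuinely $o_p(n^{-1/2})$ is the other point requiring care, whereas the mixing-inheritance and the delta-method linearisation are routine.
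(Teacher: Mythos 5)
Your proof takes essentially the same route as the paper's, which simply invokes the law of large numbers for stationary ergodic processes for consistency and ``standard central limit theory for strongly mixing sequences'' for normality. Your version is a careful expansion of that two-line argument --- the moment counting under Assumptions~4 and~5, the delta-method/linearisation step handling the plug-in of $\hat h$ and the $1/(n-k)$ normalisations, and the caveat about needing a mixing CLT under exactly finite second moments are all details the paper glosses over rather than points of divergence.
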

\begin{proof}
Under these assumptions, both $h$ and $M_k$ are finite, in addition $\hat{h}$ and $\hat{M}_k$ are consistent given the law of large numbers for stationary and ergodic processes. Finally, the joint distribution converges to a normal distribution thanks to the standard central limit theory for strongly mixing sequences.
\end{proof}
Note that $\mathfrak{m}$ is consistent even if Assumption~5 does not hold; in this case, however, $\hat{\mathfrak{m}}$ converges with a slower rate (see the discussion in \cite{LK}). Moreover, the limit joint distribution of the moments is not Gaussian.

If the noise distribution is spherical, then, as noted by \cite{H1}, the cross-covariance between $h$ and $\vec(M_k)$ vanishes for each $k$, as it is an odd-power function of the noise $\epsilon_t$. In this case, the explicit expression for $\Psi$ is
\begin{displaymath}
\Psi=
\m{ \Cov[h] & 0_{\bar{d}\times 3\bar{d}^2}\\
 0_{3\bar{d}^2\times \bar{d} } & \Cov[M_k,M_l]}
\end{displaymath}
where:
\begin{displaymath}
\Cov[h]=M_0+\sum_{k=1}^{\infty}M_k+\sum_{k=1}^{\infty}M_k^T
\end{displaymath}
and
\begin{displaymath}
\Cov[M_k,M_l]=E\left[\left(\vec{[(x_{t+k}-h)(x_{t}-h)^T]}-\vec{M_k}\right)\left(\vec{[(x_{t+l}-h)(x_{t}-h)^T]}-\vec{M_l}\right)^T\right]
\end{displaymath}
for $k,l=0,1,2$.

The following theorem represents a direct consequence of Lemma \ref{Convergence}.
\begin{theorem}\label{Normality}
Under Assumptions 1--4, the estimator $\hat{\Lambda}=(\hat{c},\vec(\hat{A}),\vec(\hat{B}))=F(\mathfrak{m})$ is consistent,  i.e. $\hat{\Lambda} \stackrel{p}{\rightarrow} \Lambda$. In addition, if Assumption~5 holds,
$\sqrt{n}(\hat{\Lambda} -\Lambda){\rightarrow}N(0,\Xi)$ with
\begin{equation}\label{usesPartialDerivatives}
\Xi=\left(\frac{\partial F(\mathfrak{m})}{\partial \mathfrak{m}}\right)\Psi\left(\frac{\partial F(\mathfrak{m})}{\partial \mathfrak{m}}\right)^{T}.
\end{equation}
\end{theorem}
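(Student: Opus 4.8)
The statement is a propagation result: since $\hat{\Lambda}=F(\hat{\mathfrak{m}})$ and $\Lambda=F(\mathfrak{m})$ for the same deterministic map $F$ assembled from the four steps of the estimation procedure, everything should follow from Lemma~\ref{Convergence} together with two standard tools. For consistency I would invoke the continuous mapping theorem: as $\hat{\mathfrak{m}}\stackrel{\text{p}}{\rightarrow}\mathfrak{m}$ under Assumptions~1--4, continuity of $F$ at $\mathfrak{m}$ gives $F(\hat{\mathfrak{m}})\stackrel{\text{p}}{\rightarrow}F(\mathfrak{m})$. For the limiting law I would invoke the delta method: if in addition $F$ is continuously differentiable at $\mathfrak{m}$, then the convergence $\sqrt{n}(\hat{\mathfrak{m}}-\mathfrak{m})\stackrel{\text{dist}}{\rightarrow}N(0,\Psi)$ granted by Assumption~5 is carried to $\sqrt{n}(F(\hat{\mathfrak{m}})-F(\mathfrak{m}))\stackrel{\text{dist}}{\rightarrow}N(0,\Xi)$ with $\Xi=(\partial F/\partial\mathfrak{m})\,\Psi\,(\partial F/\partial\mathfrak{m})^{T}$, which is exactly \eqref{usesPartialDerivatives}. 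Thus the entire content of the proof reduces to verifying that $F$ is smooth in a neighborhood of the true moment vector.

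To do so I would decompose $F$ into its elementary building blocks and check each one. The map $\mathfrak{m}\mapsto\Phi=M_2M_1^{-1}$ is rational, hence analytic, wherever $M_1$ is invertible, which holds at the true parameters (this is precisely the nonsingularity already used to define $\Phi$ in \eqref{Phi}); the passage to $\Gamma_0$ and $\Gamma_1$ through \eqref{Gammas} is polynomial in $(\mathfrak{m},\Phi)$, and forming $P$ in \eqref{defP} requires only the invertibility of $\Gamma_1$, again valid at the true value since $\Gamma_1=-B\Sigma$ with $\Sigma$ and $B$ nonsingular. The final recoveries $A=\Phi-B$ and $c=(I-\Phi)h$ are linear. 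Hence the only step whose smoothness is not immediate is the eigenvalue/eigenvector construction \eqref{B} of $B$.

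The crux, and the step I expect to be the main obstacle, is therefore the differentiability of this spectral construction. The key structural fact is Lemma~\ref{pairinglemma}: the eigenvalues of $P$ come in reciprocal pairs $(\lambda,1/\lambda)$, and at the true parameters none of them lies on the unit circle, so the $\bar{d}$ stable eigenvalues are strictly separated from the $\bar{d}$ unstable ones. This spectral gap is exactly the hypothesis under which the stable invariant subspace of $P$, equivalently the associated spectral projector, depends analytically on the entries of $P$ by standard invariant-subspace perturbation theory. The matrix $B$ selected by \eqref{B} is the \emph{stable solvent} of \eqref{pme}, and it is determined by this invariant subspace rather than by the individual eigenvectors $u_i$; consequently, even if some stable eigenvalues coincide — so that single eigenvectors need not be well defined — $B$ still varies smoothly with $\Gamma_0,\Gamma_1$. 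The one genuinely separate regularity condition to record is that $U=\m{u_1&\cdots&u_{\bar{d}}}$ be invertible at the true value, which is the nondegeneracy that makes \eqref{B} well posed; I would verify this holds at $\mathfrak{m}$ from the companion structure of $P$. Once continuity and differentiability of $F$ are established in this way, the continuous mapping theorem and the delta method close both halves of the theorem, and the Jacobian $\partial F/\partial\mathfrak{m}$ entering $\Xi$ is obtained in principle by chaining the derivatives of the elementary steps above.
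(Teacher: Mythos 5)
Your proposal is correct and follows essentially the same route as the paper: consistency via the continuous mapping theorem, the limit law via the delta method, and the only substantive work being the smoothness of $F$, which the paper establishes in Lemma~\ref{analytical} by exactly the argument you sketch — the spectral projector onto the stable invariant subspace of $P$ (written there as a Cauchy/Riesz integral) depends analytically on $P$ under the no-unimodular-eigenvalue hypothesis, and $B$ is recovered from that subspace rather than from individual eigenvectors. The only cosmetic difference is that you flag the invertibility of $U$ as a condition to check, whereas the paper absorbs it into the choice of the basis matrix $V$ in its formula \eqref{BTformula}.
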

\begin{proof}
The theorem follows from the continuous mapping theorem; in addition, the explicit expression for the covariance is obtained using the delta method. It remains to show that the partial derivative $\frac{\partial F(\mathfrak{m})}{\partial \mathfrak{m}}$ exists, which is proven in the end of Section~\ref{sec:linearAlgebra}.
\end{proof}
Note that the asymptotic properties of the closed form estimator might be employed to prove these of the QML in general (as discussed by \cite{BLR}). However, we leave this for future research.
\section{Temporal aggregation}
An interesting consequence of the results above is the direct extension to the temporal aggregation. In fact, we can now derive (as well as estimate) the parameters of the temporally aggregated multivariate GARCH(1,1) as discussed in \cite{H2}.

Temporal aggregation of a GARCH can be conducted in two different forms, depending on whether we are interested in \emph{stock} or \emph{flow} variables. We are interested in deriving a GARCH representation for the process $y^{(m)}$ aggregated over $m$ periods, which is defined in the two cases as
\begin{align*}
 y_{mt}^{(m)} &= y_{mt}, & &\text{(stock variables)}\\
 y_{mt}^{(m)} &= y_{mt} + y_{mt-1} \dotso + y_{mt-m+1} + w_{mt}^{(m)}. &&\text{(flow variables)}
\end{align*}
We denote $x_{mt}^{(m)}=\vech(y_{mt}^{(m)} y_{mt}^{(m)T})$. While in the stock case $x_{mt}^{(m)} = x_{mt}$, the relation between the second moments is more involved for flow variables.

\cite{H2} shows that the temporally aggregated process follows a weak VARMA(1,1)
\begin{align}
x_{mt}^{(m)}= c^{(m)} + \Phi ^{(m)} x_{m(t-1)}^{(m)} + \xi_{mt}^{(m)} - B ^{(m)} \xi_{m(t-1)}^{(m)}.
\end{align}
Then, equations for $B^{(m)}$ formally analogous to \eqref{Gammas} are derived:
\begin{align*}
 \Gamma_0^{(m)}&=\Sigma^{(m)} + B^{(m)} \Sigma^{(m)} (B^{(m)})^T,\\
 \Gamma_1^{(m)} &= - B^{(m)} \Sigma^{(m)},
\end{align*}
with
\begin{align*}
      \Gamma_0^{(m)}&=\sum_{i=0}^{m}J_{i}^{s}\Sigma J_{i}^{sT},\\
      \Gamma_v^{(m)}&=J_{m}^{s}\Sigma,\\
      J_{0}^{s} &=I_{\bar{d}},\\
      J_{i}^{s} &=\Phi^{i-1}A,\\
      J_{m}^{s} &=-\Phi^{m-1}B,\\
\end{align*}
for the case of stock variables, and
\begin{align*}
 \Gamma_0^{(m)}&=\sum_{i=0}^{2m-1}J_{i}^{f}\Sigma J_{i}^{fT}+\Sigma_{w}^{m}+\Phi^{m}\Sigma_{w}^{m}(\Phi^{T})^{m},\\
 \Gamma_1^{(m)}&=\sum_{i=0}^{m-1}J_{i+m}^{f}\Sigma J_{i}^{fT}-\Phi^{m}\Sigma_{w}^{m},\\
 J_{0}^{f} &=I_{\bar{d}},\\
      J_{i}^{f} &=I_{\bar{d}}+A+\Phi A+\cdots+\Phi^{i-1} A,\;\;\;\;i=1,\cdots,m-1\\
      J_{i}^{f} &=[I_{\bar{d}}+\Phi+\cdots+\Phi^{m-2}]A-\Phi^{m-1}B,\;\;\;\;i=m\\
      J_{i}^{f} &=[\Phi^{i-m}+\Phi^{i-m+1}+\cdots+\Phi^{m-2}]A-\Phi^{m-1}B,\;\;\;\;i=m+1,\cdots,2m-2,\\
      J_{2m-1}^{f} &=-\Phi^{m-1}B, \\
\end{align*}
for the case of flow variables (where an explicit expression for $\Sigma_{w}^{m}$ can be found in \cite{H2} eq.19). The author, however, rearranges them to eliminate $\Sigma^{(m)}$ in a form that differs slightly from our \eqref{pme}, and for which deriving an explicit solution is more complicated. In his words, \emph{``As of $B^{(m)}$, (29) is a system of nonlinear equations that cannot be solved explicitly.''}. With the tools provided in this paper, an explicit solution is now available. Equation (29) in \cite{H2} can be replaced with \eqref{pme} in this paper. Therefore using
\begin{align}\label{defPm}
P^{(m)}=
 \begin{bmatrix}
 0 & I\\
 -\Gamma_1^{(m)-1}\Gamma_1^{(m)T} & -\Gamma_1^{(m)-1}\Gamma_0^{(m)},
\end{bmatrix}.
\end{align}
we can carry on the procedure described in Section~\ref{sec:closedForm}. Again, the eigenvalues to choose are those inside the unit circle.
%

\cite{H2} shows that $\Gamma_0^{(m)}$ and $\Gamma_1^{(m)}$ are analytical functions of $A$, $B$ and $E(\xi_t \xi_t^T)$ and these are are function of the moments of $x_t$ only. It turns out that a closed form estimator of the temporally aggregated GARCH(1,1) can be derived as an analytical function $\Lambda^{(m)}=(c^{(m)},\vec A^{(m)},\vec B^{(m)})=G^{(m)}(h , \vec M_{0} ,\vec  M_{1},\vec M_{2})$. In particular, we can use for the estimation of the aggregated GARCH(1,1) the estimated moments of the high-frequency data, for which more information is available.

The estimator enjoys the same asymptotic properties.
\begin{theorem}\label{NormalityAggregate}
Under Assumptions 1--4, the estimator $\hat{\Lambda}^{(m)}=(\hat{c}^{(m)},\vec \hat{A^{(m)}},\vec \hat{B^{(m)}})=G^{(m)}(\mathfrak{m})$ is consistent, i.e. $\hat{\Lambda}^{(m)} \stackrel{p}{\rightarrow} \Lambda^{(m)}$. In addition, if Assumption~5 holds as well, we have:
$\sqrt{n}(\hat{\Lambda}^{(m)} -\Lambda){\rightarrow}N(0,\Xi^{(m)})$, with
\begin{equation}\label{usesPartialDerivatives2}
\Xi^{(m)}=\left(\frac{\partial G^{(m)}(\mathfrak{m})}{\partial \mathfrak{m}}\right)\Psi\left(\frac{\partial G^{(m)}(\mathfrak{m})}{\partial \mathfrak{m}}\right)^{T}.
\end{equation}
\end{theorem}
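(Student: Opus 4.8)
The plan is to mirror the proof of Theorem~\ref{Normality}, exploiting the fact that $G^{(m)}$ is a composition of maps that have already been analysed, with one additional block. Concretely, $G^{(m)}$ factors as
\[
\mathfrak{m} \;\longmapsto\; (\Phi,A,B,\Sigma) \;\longmapsto\; (\Gamma_0^{(m)},\Gamma_1^{(m)}) \;\longmapsto\; B^{(m)} \;\longmapsto\; \Lambda^{(m)},
\]
where the first arrow is the disaggregated machinery underlying $F$ (augmented with the recovery $\Sigma=-B^{-1}\Gamma_1$ from \eqref{Gammas}), the second arrow is the set of explicit aggregation formulas through the $J_i^{s}$ or $J_i^{f}$ coefficients, the third arrow is the closed form \eqref{B} applied to the matrix $P^{(m)}$ of \eqref{defPm}, and the last arrow recovers $A^{(m)}=\Phi^{(m)}-B^{(m)}$ and $c^{(m)}$ analogously to the disaggregated case. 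Once this factorization is fixed, the two claims reduce to verifying continuity and differentiability of $G^{(m)}$ at $\mathfrak{m}$.

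For consistency I would first invoke Lemma~\ref{Convergence} to obtain $\hat{\mathfrak{m}} \stackrel{\text{p}}{\rightarrow}\mathfrak{m}$ under Assumptions~1--4, and then apply the continuous mapping theorem, so that $\hat{\Lambda}^{(m)}=G^{(m)}(\hat{\mathfrak{m}}) \stackrel{\text{p}}{\rightarrow} G^{(m)}(\mathfrak{m})=\Lambda^{(m)}$. Continuity is checked one arrow at a time: the first map inherits continuity from the disaggregated construction (wherever $M_1$ is invertible and the disaggregated spectral splitting is nondegenerate), the aggregation step is polynomial in $(\Phi,A,B,\Sigma)$ and hence continuous, the eigenvalue step is handled below, and the last step is affine.

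For asymptotic normality, under the additional Assumption~5 I would combine the limit $\sqrt{n}(\hat{\mathfrak{m}}-\mathfrak{m}) \stackrel{\text{dist}}{\rightarrow} N(0,\Psi)$ from Lemma~\ref{Convergence} with the delta method; this requires differentiability of $G^{(m)}$ at $\mathfrak{m}$, and the Jacobian then transports the covariance exactly as in \eqref{usesPartialDerivatives2}. By the chain rule it suffices to differentiate each factor. The aggregation formulas are polynomial, hence analytic, in the disaggregated quantities, so their Jacobian is immediate; differentiability of the first map was already established at the end of Section~\ref{sec:linearAlgebra}.

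The main obstacle is the differentiability of the eigenvalue/eigenvector step $(\Gamma_0^{(m)},\Gamma_1^{(m)}) \mapsto B^{(m)}$. This, however, is structurally identical to the disaggregated map $(\Gamma_0,\Gamma_1)\mapsto B$ analysed in Section~\ref{sec:linearAlgebra}, since $P^{(m)}$ has exactly the same block form as $P$; the argument given there applies verbatim with $P^{(m)}$ in place of $P$. The only point requiring attention is the genericity condition that $P^{(m)}$ have no eigenvalue on the unit circle, so that the splitting into $\bar d$ eigenvalues of modulus less than one and the invertibility of the corresponding $U$ persist in a neighbourhood of $\mathfrak{m}$; under this condition the selected eigenvalues and a smooth local choice of eigenvectors depend analytically on the entries of $P^{(m)}$, hence on $\mathfrak{m}$. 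Consequently $G^{(m)}$ is differentiable at $\mathfrak{m}$ and the delta method yields \eqref{usesPartialDerivatives2}.
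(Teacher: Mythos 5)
Your proposal is correct and matches the paper's (unstated) intended argument exactly: the paper gives no explicit proof of Theorem~\ref{NormalityAggregate}, relying on the reader to see that $G^{(m)}$ is $F$ composed with the analytic aggregation formulas and the eigenvalue construction applied to $P^{(m)}$, so that Lemma~\ref{Convergence}, the continuous mapping theorem, the delta method, and Lemma~\ref{analytical} (with $P^{(m)}$ in place of $P$, under the no-unimodular-eigenvalue condition) carry over verbatim. The only cosmetic point is that recovering $\Sigma$ via $\Sigma=-B^{-1}\Gamma_1$ needs $B$ invertible; the paper's route through the Stein equation $\Sigma+B\Sigma B^T=\Gamma_0$ avoids this and is analytic whenever $\rho(B)<1$.
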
 

\cite{HR} discuss the estimation of temporally aggregated multivariate GARCH(1,1). Our estimator can be employed as a simple estimator when the number of observations is sufficiently high. Alternatively, it can be used as a consistent starting value for the QML estimation.

\section{Palindromic matrix equations and eigenvalue problems}\label{sec:linearAlgebra}
In this section, we present the linear algebra results that lead to the estimator of $B$ (as well as $B^{(m)}$). These matrix equations have been studied by many authors in linear algebra literature, see e.g. \cite{GohLR}, \cite{EngRR93}, \cite{Mei02} and the references therein; we focus here on providing a non-technical exposition. Proofs of some of the results are presented in Appendix~\ref{secproofs}.

The problem of computing one or more pairs $(\lambda,u)$ satisfying
\begin{align}\label{pep}
  (\lambda^2 \Gamma_1 +\lambda\Gamma_0 +\Gamma_1^T)u=&0, & u \neq &0,
\end{align}
is known as \emph{palindromic quadratic eigenvalue problem} \cite{Vibes}. The complex numbers $\lambda$ are called \emph{generalized eigenvalues} and the vectors $u$ \emph{generalized eigenvectors}. It is indeed a generalization of the standard eigenvalue problem, i.e., given a matrix $A$ finding pairs $(\lambda,u)$ satisfying $(A-\lambda I)u=0$.

First, we show that all the solutions to \eqref{pme} can be constructed from generalized eigenvalues and eigenvectors of \eqref{pep}.
\begin{lemma}\label{solGenEig}
 Let $(\lambda_1,u_1), (\lambda_2,u_2), \dots, (\lambda_{\bar{d}}, u_{\bar{d}})$ be $\bar{d}$ different pairs of generalized eigenvalues and eigenvectors of the problem \eqref{pep}, such that the matrix $U$ (as in \eqref{B}) is invertible. Then, $B=(U^T)^{-1} D U^T$ is a solution of \eqref{pme}. All the solutions of \eqref{pme} can be obtained in this way.
\end{lemma}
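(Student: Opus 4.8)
The plan is to reduce the palindromic matrix equation \eqref{pme} to a quadratic \emph{solvent} equation and then to match its solutions, column by column, with the generalized eigenpairs of \eqref{pep}. Setting $M=B^T$ and observing that $B=(U^T)^{-1}DU^T$ is the same as $M=B^T=UDU^{-1}$, equation \eqref{pme} reads $\Gamma_1 M^2+\Gamma_0 M+\Gamma_1^T=0$. For the first (existence) claim I would start from the $\bar d$ scalar relations $(\lambda_i^2\Gamma_1+\lambda_i\Gamma_0+\Gamma_1^T)u_i=0$ defining the eigenpairs and stack them into the single matrix identity $\Gamma_1 UD^2+\Gamma_0 UD+\Gamma_1^T U=0$, using that the $i$-th columns of $UD$ and $UD^2$ are $\lambda_i u_i$ and $\lambda_i^2 u_i$. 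Right-multiplying by $U^{-1}$ (which exists since $U$ is assumed invertible) and using $M=UDU^{-1}$, $M^2=UD^2U^{-1}$, this becomes $\Gamma_1 M^2+\Gamma_0 M+\Gamma_1^T=0$, i.e. $B$ solves \eqref{pme}. This direction is pure bookkeeping once the substitution $M=UDU^{-1}$ is in place.

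For the converse (every solution arises this way) I would argue through the eigenstructure of $M=B^T$. If $B$ solves \eqref{pme} and $(\mu,w)$ is any eigenpair of $M$, then applying $\Gamma_1 M^2+\Gamma_0 M+\Gamma_1^T=0$ to $w$ and using $Mw=\mu w$ yields $(\mu^2\Gamma_1+\mu\Gamma_0+\Gamma_1^T)w=0$, so $(\mu,w)$ is automatically a generalized eigenpair of \eqref{pep}. When $M$ is diagonalizable, collecting a full eigenbasis into an invertible $U$ gives $M=UDU^{-1}$ with every column drawn from \eqref{pep}, which is exactly the claimed form $B=(U^T)^{-1}DU^T$; thus every solution with diagonalizable $B^T$ is recovered.

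The main obstacle is the word \emph{all} in the statement: a solution $B$ whose transpose is defective cannot be written as $UDU^{-1}$ with $D$ diagonal, so such solutions are not literally of the stated form. The clean way to treat the correspondence uniformly is the linearization $P$ of \eqref{defP}: a direct computation shows that $M$ solves the solvent equation if and only if the graph subspace $\im\m{I\\M}$ is $P$-invariant, and that $P\m{u_i\\\lambda_i u_i}=\lambda_i\m{u_i\\\lambda_i u_i}$ precisely when $(\lambda_i,u_i)$ solves \eqref{pep}. Under this correspondence a diagonalizable solvent matches a $P$-invariant subspace spanned by $\bar d$ eigenvectors whose first blocks $u_i$ form an invertible $U$, reproducing \eqref{B}, while the defective case matches invariant subspaces containing Jordan chains and is covered by the classical solvent / invariant-subspace theory of \cite{GohLR}. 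I would therefore either state completeness for diagonalizable (generically, distinct-eigenvalue) solutions, which is the case relevant to the estimator, or invoke this invariant-subspace description to account for the remaining non-generic solutions.
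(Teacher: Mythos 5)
Your proof is correct and follows essentially the same route as the paper's: the existence direction is the identical stacking of the eigenpair relations into $\Gamma_1 UD^2+\Gamma_0 UD+\Gamma_1^T U=0$ followed by right-multiplication by $U^{-1}$, and your converse (reading the solvent equation on each eigenvector of $M=B^T$) is just the paper's ``reverse all the steps'' argument made explicit column by column. Your caveat about defective $B^T$ is well taken---the paper's converse also tacitly assumes a spectral decomposition $B^T=UDU^{-1}$ exists and relegates the Jordan/invariant-subspace case to a remark---so your treatment matches the paper's level of rigor and generality.
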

Moreover, in our problem the possible values of $\lambda$ can be ``paired''.
\begin{lemma}\label{pairinglemma}
Let $\Gamma_0$ and $\Gamma_1$ be real matrices, with $\Gamma_0$ symmetric.
 If $\lambda \neq 0$ is a generalized eigenvalue of \eqref{pep}, then $1/\lambda$ is one as well.
\end{lemma}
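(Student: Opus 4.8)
The plan is to exploit the \emph{palindromic} symmetry of the matrix polynomial
\[
 Q(\lambda) \equiv \lambda^2 \Gamma_1 + \lambda \Gamma_0 + \Gamma_1^T
\]
directly, at the level of determinants. First I would recall that, since $Q(\lambda)$ is a square $\bar{d}\times\bar{d}$ matrix, $\lambda$ is a generalized eigenvalue of \eqref{pep} precisely when $\det Q(\lambda)=0$; this reduces the claim to a statement about the roots of the scalar polynomial $p(\lambda)\equiv\det Q(\lambda)$, namely that its nonzero roots come in reciprocal pairs.

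The key step is the identity
\[
 Q(\lambda)^T = \lambda^2\, Q(1/\lambda), \qquad \lambda \neq 0.
\]
To obtain it I would transpose $Q(\lambda)$ and use the hypothesis $\Gamma_0^T=\Gamma_0$, getting $Q(\lambda)^T = \lambda^2\Gamma_1^T + \lambda\Gamma_0 + \Gamma_1$, and then observe that this coincides term by term with $\lambda^2 Q(1/\lambda) = \Gamma_1 + \lambda\Gamma_0 + \lambda^2\Gamma_1^T$. The symmetry of $\Gamma_0$ is exactly what keeps the middle coefficient fixed while the two outer coefficients $\Gamma_1$ and $\Gamma_1^T$ exchange their roles under $\lambda\mapsto 1/\lambda$; this is the structural heart of the argument.

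Taking determinants on both sides and using $\det Q(\lambda)=\det Q(\lambda)^T$ then yields
\[
 p(\lambda) = \det Q(\lambda) = \lambda^{2\bar{d}}\det Q(1/\lambda) = \lambda^{2\bar{d}}\, p(1/\lambda).
\]
If $\lambda\neq 0$ is a generalized eigenvalue, so that $p(\lambda)=0$, then since $\lambda^{2\bar{d}}\neq 0$ the displayed relation forces $p(1/\lambda)=0$, i.e.\ $1/\lambda$ is a generalized eigenvalue as well. (Equivalently, one may argue with eigenvectors: transposing $Q(\lambda)u=0$ and dividing by $\lambda^2$ shows $u^T Q(1/\lambda)=0$, so $Q(1/\lambda)$ has a nontrivial left null vector and is therefore singular.) I do not foresee a serious obstacle: the proof rests entirely on the one algebraic identity above. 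The only points demanding care are tracking the hypothesis $\lambda\neq 0$, which is needed both to substitute $1/\lambda$ and to cancel the factor $\lambda^{2\bar{d}}$, and making explicit use of the symmetry of $\Gamma_0$, without which the transpose would fail to reproduce $Q$ at the reciprocal argument.
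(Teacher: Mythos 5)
Your proof is correct and follows essentially the same route as the paper's: both reduce the claim to the determinant identity $\det Q(\lambda)=\lambda^{2\bar d}\det Q(1/\lambda)$ obtained from transposition and the symmetry of $\Gamma_0$, then conclude that nonzero roots come in reciprocal pairs. The only difference is cosmetic — the paper reaches the transposed polynomial via a detour through complex conjugation followed by the conjugate transpose (thereby also invoking the reality of $\Gamma_0,\Gamma_1$), whereas you transpose directly, which is slightly cleaner and needs only $\Gamma_0^T=\Gamma_0$.
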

Finally, the following result shows that we can reduce the palindromic eigenvalue problem to a standard eigenvalue problem.
\begin{lemma}\label{eigenlemma} Suppose that the matrix $\Gamma_1$ is invertible.
\begin{enumerate}
 \item Let $(\lambda,u)$ be a solution of \eqref{pep}. Then, $\lambda$ is an eigenvalue of $P$,
with corresponding eigenvector
\[
w=
\begin{bmatrix}
 u\\ \lambda u
\end{bmatrix}
\]
\item Conversely, if $\lambda$ is an eigenvalue of $P$ with eigenvector $w$, then $(\lambda,u)$ satisfy \eqref{pep}, where $u$ is the vector formed by the first $\bar{d}$ entries of $w$.
\end{enumerate}
\end{lemma}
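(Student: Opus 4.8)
The plan is to recognize $P$ as a companion-type linearization of the quadratic matrix pencil $\lambda^2\Gamma_1 + \lambda\Gamma_0 + \Gamma_1^T$ and to establish both directions by a direct block-matrix computation. The standing hypothesis that $\Gamma_1$ is invertible is precisely what makes $P$ well-defined, since its bottom row carries the factor $\Gamma_1^{-1}$; I would note this at the outset. Throughout I would keep vectors partitioned as $w=\m{w_1\\w_2}$ with $w_1,w_2\in\mathbb{C}^{\bar{d}}$, so that the argument covers complex eigenpairs as well.

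For the first part I would substitute $w=\m{u\\\lambda u}$ into $Pw$. The top block of $Pw$ returns $\lambda u$, which matches the top block of $\lambda w$ automatically. The bottom block returns $-\Gamma_1^{-1}\Gamma_1^T u - \lambda\,\Gamma_1^{-1}\Gamma_0 u$; setting this equal to $\lambda^2 u$ (the bottom block of $\lambda w$) and left-multiplying by $-\Gamma_1$ reproduces exactly the palindromic relation \eqref{pep}. Hence $Pw=\lambda w$, and since $u\neq 0$ the vector $w$ is nonzero, so $\lambda$ is a genuine eigenvalue of $P$ with the stated eigenvector.

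For the converse I would start from $Pw=\lambda w$ and read off the two block equations. The top block forces $w_2=\lambda w_1$, which pins down the structure of every eigenvector of $P$; writing $u:=w_1$ gives $w=\m{u\\\lambda u}$. Substituting $w_2=\lambda w_1$ into the bottom block and again left-multiplying by $-\Gamma_1$ recovers \eqref{pep}. To close the argument I would verify $u\neq 0$: if $w_1=0$ then $w_2=\lambda w_1=0$, forcing $w=0$ and contradicting that $w$ is an eigenvector; therefore $(\lambda,u)$ is a genuine solution of \eqref{pep}.

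Because each direction is a direct verification, I do not expect a substantive obstacle. The only points demanding care are the sign and the left-multiplication by $-\Gamma_1$ used to clear the inverse, together with the bookkeeping that guarantees nonzero vectors on both sides of the correspondence. The invertibility of $\Gamma_1$ plays a double role here: it ensures that $P$ exists, and it makes the correspondence a genuine bijection between the eigenpairs of \eqref{pep} and those of $P$ rather than a one-sided inclusion, which is what is needed for the procedure of Section~\ref{sec:closedForm} to recover all relevant solutions of \eqref{pme}.
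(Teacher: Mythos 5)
Your proof is correct and follows essentially the same route as the paper's: a direct block-wise verification that $Pw=\lambda w$ is equivalent to \eqref{pep} after clearing the factor $\Gamma_1^{-1}$, with the top block forcing $v=\lambda u$ in the converse direction. The only additions beyond the paper's argument are the explicit check that $u\neq 0$ and the remark on the role of invertibility, both of which are harmless and slightly strengthen the exposition.
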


By combining all the above results, we can obtain a closed form for $B$. Note that different solutions are possible; namely, every choice of $\bar{d}$ eigenvalues out of the $2\bar{d}$ of $P$ gives a different $B$ satisfying \eqref{pme}; however, only the one with $\abs{\lambda_i}<1$, $i=1,2,\dots,\bar{d}$ results in a $B$ with all its roots inside the unit circle.

\begin{remark}
The invertibility of $\Gamma_1$ is not a crucial assumption. If $\Gamma_1$ is singular, we can obtain in a similar way not an eigenvalue problem of the form $Mv=\lambda v$, but the slightly more general form $Mv=\lambda Nv$. This is known as \emph{generalized eigenvalue problem}, and there are plenty of algorithms to find a closed-form solution to it. For instance, the Matlab command \texttt{eig(M,N)}. Similarly, if $P$ is not diagonalizable, solutions $B$ to the matrix equation \eqref{pme} can be defined in terms of its Jordan canonical form.
\end{remark}

The existence of the partial derivatives $\frac{\partial F(h , \vec M_{0} , \vec M_{1}, \vec M_{2})}{\partial (h , \vec M_{0} , \vec M_{1}, \vec M_{2})}$ that are needed in Theorem~\ref{Normality} can be shown, again under the condition that $P$ has no unimodular eigenvalues.
\begin{lemma}\label{analytical}
 Suppose that the matrix $P$ has no eigenvalues on the unit circle. Then, $B$ is an analytical function of the equation coefficients $\Gamma_0$, $\Gamma_1$.
\end{lemma}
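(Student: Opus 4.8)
The plan is to sidestep any argument about individual eigenvalues and eigenvectors of $P$ --- which generically fail to be analytic at parameter values where two eigenvalues coalesce, and there the matrix $U$ in \eqref{B} and the factorization $B=(U^T)^{-1}DU^T$ lose meaning --- and instead to work with the spectral projector onto the entire \emph{stable} invariant subspace of $P$, which stays analytic precisely because the hypothesis excludes eigenvalues on the unit circle. First I would note that the entries of $P$ in \eqref{defP} are rational in the entries of $\Gamma_0,\Gamma_1$ (the only non-polynomial ingredient being $\Gamma_1^{-1}$), so $P$ is real-analytic in $(\Gamma_0,\Gamma_1)$ on the open set where $\Gamma_1$ is invertible; $P$ is only defined there, and the hypothesis presupposes this. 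By Lemma~\ref{pairinglemma} the spectrum of $P$ is symmetric under $\lambda\mapsto 1/\lambda$, so the absence of unimodular eigenvalues forces exactly $\bar d$ eigenvalues strictly inside the unit circle and $\bar d$ strictly outside. The unit circle thus separates the spectrum, and I would introduce the Riesz projector
\[
\Pi=\frac{1}{2\pi i}\oint_{\abs{z}=1}(zI-P)^{-1}\,dz,
\]
which projects onto the $\bar d$-dimensional invariant subspace $\mathcal{S}$ associated with the enclosed eigenvalues. Since no eigenvalue lies on the contour, $(zI-P)^{-1}$ is jointly analytic in $z$ (on a neighbourhood of the contour) and in the entries of $P$; differentiating under the integral sign over the compact contour shows $\Pi$ is analytic in $P$, hence in $(\Gamma_0,\Gamma_1)$. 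As the enclosed part of the spectrum is closed under conjugation ($P$ is real), $\Pi$ is real.

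Next I would recover $B$ from $\Pi$ by identifying $\mathcal{S}$ with the graph of $B^T$. By Lemma~\ref{eigenlemma} the stable subspace is spanned by vectors $\m{u\\ \lambda u}$, so $\mathcal{S}=\im\m{I\\ B^T}$ for the $B$ of \eqref{B}; more robustly, writing $\mathcal{S}=\im\m{I\\ X}$ and imposing $P$-invariance, a short block computation gives $\Gamma_1 X^2+\Gamma_0 X+\Gamma_1^T=0$, i.e. $X=B^T$ solves \eqref{pme}, with the stable $\mathcal{S}$ selecting exactly the root whose spectrum lies inside the disk. Partitioning $\Pi=\m{\Pi_{11}&\Pi_{12}\\ \Pi_{21}&\Pi_{22}}$ into $\bar d\times\bar d$ blocks, the fact that $\im\Pi=\mathcal{S}$ is a graph over the first block coordinate says the top block row $R:=\m{\Pi_{11}&\Pi_{12}}$ has full row rank $\bar d$ and that $\m{\Pi_{21}&\Pi_{22}}=B^T R$. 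Solving yields
\[
B^T=\m{\Pi_{21}&\Pi_{22}}R^T\left(R R^T\right)^{-1},
\]
which expresses $B$ through matrix products and the inversion of the invertible matrix $RR^T$ applied to the analytic quantity $\Pi$. Composing, $B$ is analytic in $(\Gamma_0,\Gamma_1)$.

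The delicate step, and the real content of the argument, is the analyticity of $\Pi$: per-eigenvalue spectral data of $P$ acquire branch-point singularities exactly at eigenvalue collisions, and it is the contour-integral representation that circumvents this, trading the non-analytic eigenvalue picture for the globally analytic projector onto the whole stable subspace under the spectral-gap hypothesis. I expect the remainder to be routine bookkeeping: checking that $RR^T$ is invertible (equivalently, that $\mathcal{S}$ is a graph over the first block, which is exactly the solvability condition of the construction \eqref{B}), and confirming that all the operations producing $B^T$ from $\Pi$ preserve real-analyticity.
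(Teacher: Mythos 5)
Your proposal is correct and follows essentially the same route as the paper's proof: both represent the projector onto the stable invariant subspace of $P$ by the Cauchy/Riesz contour integral over the unit circle, observe that this subspace is the graph of $B^T$, and recover $B^T$ analytically from a full-rank factorization of the projector. The only (harmless) differences are cosmetic: you extract $B^T$ via the pseudo-inverse of the top block row of $\Pi$ rather than via an auxiliary locally constant matrix $V$, and you obtain analyticity of the projector by differentiating the resolvent under the integral sign directly instead of passing through diagonalizable matrices and a density argument.
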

Since $\Gamma_0$ and $\Gamma_1$ are in turn analytical functions of the $M_i$, the partial derivative exists. A sketch of proof of this result is in the appendix, together with a more explicit expression for the Jacobian.

\section{Small sample issues}\label{unimodular}
The results provided in this paper should be employed with caution whenever the sample size $n$ is not large enough. The closed-form estimator is based on the sample estimates of $\hat{M}_0$, $\hat{M}_1$, $\hat{M}_2$. However, it may be the case that the sample moments do not respect all the stated assumptions.
More specifically, three different kind of issues can arise:
\begin{description}
 \item[Positivity] $\hat{c}$, $\hat{A}$ and $\hat{B}$ do not guarantee that $H_t$ is positive definite.
 \item[Stationarity] The roots of $\hat{\Phi}=\hat{M}_{k+1}\hat{M}_{k}^{-1}$ lie on or outside the unit circle.
 \item[Invertibility] $\hat{B}$ has unimodular eigenvalues (i.e., on the unit circle).
 \end{description}
When the GARCH parameters are estimated via maximum likelihood, the constraints of respecting these conditions are usually imposed when solving the optimization problem; see for instance \cite{CO}. Since black-box optimization routines are used, additional constraint are easy to impose, but they make the resulting problem more complicated to solve. On the plus side, they guarantee that the 
resulting model has the desired properties, provided that the iterative optimization procedure does not fail. Instead, with an exact moment-based estimator, if one or more of these conditions fail, then the best way out is modifying the sample moments or the estimates \emph{a posteriori} to make sure that they satisfy these constraints. We discuss briefly these problems that may arise when our closed form estimation is employed.

\subsection{Positivity}
Sufficient conditions for positivity are discussed by several authors (see \cite{G}, \cite{CO}, \cite{FZ}). However, as far as we know, the problem of finding necessary conditions has not been dealt with in literature. Indeed, even the simpler problem of finding all linear maps among symmetric matrix spaces that preserve positive semi-definiteness has no simple solution, see for instance \cite[Chapters~2 and 3]{Bhatia}. Our estimation procedure does not always produce an estimated GARCH satisfying the sufficient conditions cited above. 
We do not deal here with the problem of finding a weaker set of conditions that can be preserved.

\subsection{Stationarity}
In small samples the estimate of $\hat{\Phi}=\hat{M}_{k+1}\hat{M}_{k}^{-1}$ can have eigenvalues on or outside the unit circle; moreover, the values $\hat{\Phi}^{(k)}$ computed by choosing different values of $k$ in the former expression will in general be different. The choice described above of taking $\hat{\Phi}^{(1)}$ and ignoring all the other autocovariances ratios is the simplest way out of the latter problem. \cite{LK} discuss this problem in the scalar case, and suggest as another valid approach taking $\frac13\left(\hat{\Phi}^{(1)} + \hat{\Phi}^{(2)} + \hat{\Phi}^{(3)}\right)$, or in general any convex combination $\sum w_i \Phi^{(i)}$.

To avoid problems with noninvertibility or outliers, in the multivariate case it is more advisable use instead a least-square solution $\hat{\Phi}_*$ of the system
\[
 \hat{\Phi}_* \m{w_1\hat{M}_1 & w_2\hat{M}_2 & \cdots & w_{n-1}\hat{M}_{n-1}}  =  \m{w_1\hat{M}_2 & w_2\hat{M}_3 & \cdots & w_{n-1}\hat{M}_{n}},
\]
again with suitably-chosen weights $w_i$. None of these solutions (and no choice of weights) clearly stands out. In particular, all of them may result in estimates $\hat{\Phi}$ with eigenvalues equal or larger than 1. When this happens, a simple fix is projecting the estimate on the space of acceptable GARCH solutions by altering the eigenvalues that lie on or outside the unit circle. 

\subsection{Invertibility}
Our assumptions on the solution $B$ guarantee that it has all eigenvalues inside the unit disc, and thus that the matrix $P$ has no unimodular eigenvalues. However, once again the sample autocovariances from a finite-time realization of a GARCH process may lead to sample estimates of $\hat{\Gamma}_0$ and $\hat{\Gamma}_1$ that do no necessarily guarantee that $\hat{P}$ has no eigenvalues on the unit circle. This is especially true when the process is close to a non-invertible one. When $\hat{B}$ has unimodular eigenvalues the invertibility condition does not hold. In addition the following theorem sheds light on the consequences of having eigenvalues lying on the unit disk.
\begin{theorem}
 \begin{enumerate}
The following results hold.
  \item If $P$ has no eigenvalues lying on the unit circle, then there exist unique solutions $B$ and $\Sigma$, where $\Sigma=\Sigma^T$ and $\abs{\lambda}<1$ for each eigenvalue $\lambda$ of $B$, and they can be computed with the above procedure.
  \item If $P$ has eigenvalues lying on the unit circle, then $B$ and a positive definite $\Sigma$ satisfying \eqref{nme} exist only if some strong additional conditions are satisfied (in particular, all unimodular eigenvalues should have even multiplicity). In this case, $B$ always has unimodular eigenvalues.
 \end{enumerate}
\end{theorem}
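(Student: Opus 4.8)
The plan is to recast both statements in terms of the $P$-invariant subspaces of $P$ (defined in \eqref{defP}) and the reciprocal pairing of Lemma~\ref{pairinglemma}, which comes from the nondegenerate bilinear form attached to the palindromic pencil $\lambda^2\Gamma_1+\lambda\Gamma_0+\Gamma_1^T$ in \eqref{pep}. By Lemmas~\ref{solGenEig} and~\ref{eigenlemma}, every solution $B$ of \eqref{pme} arises by selecting $\bar d$ eigenpairs of $P$ and forming $B=(U^T)^{-1}DU^T$ as in \eqref{B}; the eigenvalues of $B$ are exactly the selected $\lambda_i$, and the columns $u_i$ of $U$ solve \eqref{pep}. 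Throughout, I would track the scalars $g_{ij}=u_i^T\Gamma_1 u_j$, since the symmetry of $\Sigma$ turns out to be controlled entirely by these together with the selected eigenvalues.

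For the first part, I would first use Lemma~\ref{pairinglemma}: absence of unimodular eigenvalues forces the $2\bar d$ eigenvalues of $P$ to split into exactly $\bar d$ strictly inside and $\bar d$ strictly outside the unit disc. Selecting the inside ones and applying Lemmas~\ref{solGenEig}--\ref{eigenlemma} produces a $B$ whose spectrum satisfies $\abs{\lambda}<1$. Recovering $\Sigma$ from $\Gamma_1=-B\Sigma$ (well defined because $\Gamma_1$, hence $P$, hence $B$, is invertible), symmetry of $\Sigma$ is equivalent to $\Gamma_1 B^T$ being symmetric. To establish it I would write \eqref{pep} for $u_i$, left-multiply by $u_j^T$, do the symmetric computation with $i,j$ interchanged, and eliminate the $\Gamma_0$-term to obtain the identity $(\lambda_i\lambda_j-1)(\lambda_j g_{ij}-\lambda_i g_{ji})=0$. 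Since all selected eigenvalues lie inside the disc, $\lambda_i\lambda_j\neq1$, so $\lambda_j g_{ij}=\lambda_i g_{ji}$ for all $i,j$, which is precisely the symmetry of $\Gamma_1 B^T$ and hence of $\Sigma$; feeding this back into \eqref{pme} and cancelling the invertible factor $B^T$ recovers the $\Gamma_0$-equation, so \eqref{nme} holds. Uniqueness then follows because the invariant subspace spanned by the eigenvectors with $\abs{\lambda}<1$ is the unique such subspace of $P$, so any admissible $B$ must have its eigenvectors span it, pinning down $B$ and therefore $\Sigma$.

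For the second part the governing object is the sign characteristic of the unimodular eigenvalues with respect to this form: a positive definite, symmetric $\Sigma$ corresponds to a real, $P$-invariant, isotropic (half-dimensional) subspace on which the form is definite. Off the unit circle the form pairs the eigenspace of $\lambda$ with that of $1/\lambda$, so one simply keeps one representative of each reciprocal pair; this is the isotropy already visible in the factor $\lambda_i\lambda_j-1$. A genuinely unimodular $\lambda$ is self-paired, since $1/\lambda=\bar\lambda$: reality of $B$ forces selecting $\lambda$ together with $\bar\lambda$, exactly the case $\lambda_i\lambda_j=1$ in which the identity above gives no constraint. I would argue, using the indefinite-inner-product machinery of Gohberg, Lancaster and Rodman \cite{GohLR}, that a real isotropic subspace on which the form is definite can be extracted from such a root space only when its dimension is even and its sign characteristic is balanced, which is the stated even-multiplicity requirement. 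Finally, letting $s$ denote the number of eigenvalues strictly inside the disc, the reciprocal pairing gives $s$ strictly outside and $\bar d-s\geq1$ reciprocal pairs lying on the unit circle; since an isotropic selection must take one representative from each of the $\bar d$ pairs, it necessarily draws on these circle pairs, so $B$ unavoidably inherits unimodular eigenvalues.

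The main obstacle I expect is this second part: turning the heuristic ``definite isotropic subspace'' picture into a rigorous necessary condition, which requires defining the sign characteristic of the unimodular eigenvalues and showing that positivity of $\Sigma$ is incompatible with an odd-multiplicity unimodular eigenvalue. The symmetry argument of the first part is essentially routine once the identity $(\lambda_i\lambda_j-1)(\lambda_j g_{ij}-\lambda_i g_{ji})=0$ is in hand; the genuinely delicate point throughout is reconciling the complex-conjugate pairing needed for a real $B$ with the reciprocal pairing needed for a symmetric, and then definite, $\Sigma$.
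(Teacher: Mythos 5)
Your proposal is sound, but it is worth knowing that the paper does not actually prove this theorem: it explicitly omits the argument as ``more technical than those for the other linear algebra results'' and refers to \cite{EngRR93} for a complete treatment, supplying only a one-line counting argument for the final assertion (since the eigenvalues of $B$ are a subset of those of $P$, and $P$ has fewer than $\bar{d}$ eigenvalues strictly inside the disc when some lie on the circle, $B$ cannot be stable). So your write-up is strictly more informative than the source. Your part~1 is essentially a complete and correct elementary proof: the identity $(\lambda_i\lambda_j-1)(\lambda_j g_{ij}-\lambda_i g_{ji})=0$ with $g_{ij}=u_i^T\Gamma_1 u_j$ does follow from left-multiplying \eqref{pep} by $u_j^T$, swapping indices, and eliminating the $\Gamma_0$-term (symmetry of $\Gamma_0$ is where Lemma~\ref{pairinglemma}'s hypothesis enters), and $\lambda_j g_{ij}=\lambda_i g_{ji}$ is exactly symmetry of $U^T\Gamma_1 B^T U$, hence of $\Gamma_1 B^T$, hence of $\Sigma=-B^{-1}\Gamma_1$; substituting back into \eqref{pme} and cancelling $B^T$ recovers \eqref{nme}, and uniqueness follows from uniqueness of the stable invariant subspace of $P$. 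This is the standard argument in the indefinite-inner-product literature the paper points to, so you have reconstructed what the citation hides. Your part~2 is, like the paper's, only a sketch that correctly names the missing ingredient (the sign characteristic of unimodular eigenvalues and the even-partial-multiplicity condition for \eqref{nme}, which is the equation $X+A^*X^{-1}A=Q$ studied in \cite{EngRR93}); you are honest that you would have to import that machinery from \cite{GohLR}, which is precisely what the authors do.

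Three small points you should tighten in part~1. First, realness of $B$: you must observe that the set of eigenvalues strictly inside the disc is closed under complex conjugation (because $P$ is real), so the selected invariant subspace is real and $B$ is real. Second, the construction presupposes that $U=\m{u_1&\cdots&u_{\bar d}}$ is invertible for the stable selection, i.e.\ that the stable invariant subspace of $P$ is a graph subspace; this holds here because the model guarantees a stable solution exists, but it is an assumption your argument silently uses. Third, your eigenvector-based bookkeeping assumes the stable part of $P$ is diagonalizable; for Jordan blocks the same identities go through with generalized eigenvectors (the paper relegates this to a remark), but as written your $g_{ij}$ computation does not literally cover that case. None of these affects the substance.
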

A full proof is more technical than those for the other linear algebra results that we reported; we omit it and refer to \cite{EngRR93} for a complete presentation. However, the last assertion is clear in view of our derivation: since the eigenvalues of $B$ are a subset of those of $P$, $B$ cannot have all its eigenvalues inside the unit circle if $P$ has less than $\bar{d}$ eigenvalues in that domain. If the existence conditions are not satisfied, we can still compute solutions $B$ with $\rho(B) = 1$ with the procedure of Section~\ref{sec:closedForm}; there are multiple solutions, according to which unimodular eigenvalues we choose, but none of them will result in a symmetric $\Sigma$. Ad-hoc modifications of $P$ can be made when unimodular eigenvalues are detected, but in general the accuracy of the computed solution is expected to decrease. Indeed, we show in Appendix~\ref{appDer} that the derivative of $B$ with respect to the moments can become unbounded when it has eigenvalues equal to $1$. 

We are currently working on developing a general procedure for computing a small-norm modification of the $\hat{M}_k$ that makes the estimated model invertible, rooted on results in linear algebra and eigenvalue perturbation theory.

We point out that the same problem arises in the scalar case treated by \cite{LK}: when (in our notation) $\Gamma_0/\Gamma_1>2$, the scalar quadratic equation \eqref{pme} has two complex conjugate solutions with modulus 1, and the procedure breaks down.






\section{Acknowledgments}
We are grateful to Christian Francq, Mark M. Meerschaert, Hans-Peter Scheffler, Dalibor Volny and Jean-Michel Zako\"ian for providing interesting and useful comments on an early revision of this paper. All remaining errors in the paper are solely the responsibility of the authors.

F.~Poloni thanks the Rouen Business School for support for a brief research visit, which was instrumental in completing the paper.

\appendix
\section{Proofs}\label{secproofs}

\subsection*{Proof  of Lemma~\ref{solGenEig}}
\begin{proof}
 Using the fact that the $(\lambda_i,u_i)$ are generalized eigenvalues, one can check directly that each column of the matrix
 \[
  \Gamma_1 UD^2 + \Gamma_0 UD + \Gamma_1^T U
 \]
 is zero; therefore,
 \[
 0=(\Gamma_1 UD^2 + \Gamma_0 UD + \Gamma_1^T U) U^{-1} = \Gamma (UDU^{-1})(UDU^{-1}) + \Gamma_0 (UDU^{-1}) + \Gamma_1^T,
 \]
 as required. For the converse implication, let $B^T=(UDU^{-1})$ be the spectral decomposition of a solution; we can reverse all the steps and obtain that each $(\lambda_i,u_i)$ is a generalized eigenpair.
\end{proof}
\subsection*{Proof of Lemma~\ref{pairinglemma}}
\begin{proof}
 Let $\lambda$ satisfy \eqref{pep} for some choice of $u\neq 0$. Since $\Gamma_0$, $\Gamma_1$ are real, we can take the complex conjugate of every term and get
 \[
  (\bar{\lambda}^2 \Gamma_1 + \bar{\lambda}\Gamma_0+\Gamma_1^T) \bar{u}=0,
 \]
 where $\bar{\lambda}$ and $\bar{u}$ denote (componentwise) complex conjugation. In particular, this implies that
 \[
  \det (\bar{\lambda}^2 \Gamma_1 + \bar{\lambda}\Gamma_0+\Gamma_1^T) =0.
 \]
Then the determinant of its conjugate transpose must be 0 as well, and thus
 \[
  \det (\lambda^2 \Gamma_1^T + \lambda\Gamma_0+\Gamma_1) =0.
 \]
Multiply everything by $\frac{1}{\lambda^2}$, to obtain
\[
  0=\det \frac{1}{\lambda^2}(\lambda^2 \Gamma_1^T + \lambda\Gamma_0+\Gamma_1) = \det \left(\Gamma_1^T+\frac{1}{\lambda}\Gamma_0 + \left(\frac{1}{\lambda}\right)^2\Gamma_1\right).
\]
Since this determinant is zero, the matrix is singular and there must be a vector $\tilde{u}$ such that
\[
 \left(\Gamma_1^T+\frac{1}{\lambda}\Gamma_0 + \left(\frac{1}{\lambda}\right)^2\Gamma_1\right)\tilde{u}=0.
\]
But this equation shows that the pair $\left(\frac{1}{\lambda},\tilde{u}\right)$ is also a generalized eigenpair of the polynomial eigenvalue problem.
\end{proof}
\subsection*{Proof of Lemma~\ref{eigenlemma}}
\begin{proof}
 \begin{enumerate}
  \item Let $\lambda,u$ be a solution to \eqref{pep}.
We may verify explicitly that the equation
\[
\begin{bmatrix}
 0 & I\\
 -\Gamma_1^{-1}\Gamma_1^T & -\Gamma_1^{-1}\Gamma_0
\end{bmatrix}
\begin{bmatrix}
 u\\ \lambda u
\end{bmatrix}
=\lambda
\begin{bmatrix}
 u\\ \lambda u
\end{bmatrix}
\]

holds.
\item Let $w$ be an eigenvector of $P$ with eigenvalue $\lambda$, and partition it as
\[
 w=\begin{bmatrix}
    u\\v
   \end{bmatrix}.
\]
From the eigenvalue condition $Pw=\lambda w$ we obtain
\[
\begin{bmatrix}
 0 & I\\
 -\Gamma_1^{-1}\Gamma_1^T & -\Gamma_1^{-1}\Gamma_0
\end{bmatrix}
\begin{bmatrix}
 u\\ v
\end{bmatrix}
=\lambda
\begin{bmatrix}
 u\\ v
\end{bmatrix}.
\]
Expanding the two blocks and eliminating $v$ from the resulting equations, one gets \eqref{pep}.
\end{enumerate}
\end{proof}
\subsection*{Proof of Lemma~\ref{analytical}}
The proof follows from some classical results in matrix polynomials that can be found, for instance, in \cite{GohLR}. We give the sketch of a self-contained proof here. We start from a classical result in complex analysis, the Cauchy integral formula 
\[
 \frac{1}{2\pi i} \int_{\abs{z}=1} (z  - \lambda)^{-1}\,\textrm{d}z = \begin{cases}1 & \abs{\lambda}<1, \\ 0 & \abs{\lambda}>1. \end{cases}
\]
From this, a matrix version of the same integral follows for diagonal matrices
\[
 \frac{1}{2\pi i} \int_{\abs{z}=1} (z I - D)^{-1}\,\textrm{d}z = \Pi,
\]
where $\Pi$ is the diagonal matrix such that $\Pi_{ii}$ is zero if $\abs{D_{ii}}>1$ and one if $\abs{D_{ii}}<1$. Now a change of bases in both sides of the equation yields for all diagonalizable $A$ without unimodular eigenvalues
\begin{equation}\label{intformula}
 \frac{1}{2\pi i} \int_{\abs{z}=1} (z I - A)^{-1}\,\textrm{d}z = \Pi_A,
\end{equation}
with $\Pi_A$ the projector on the invariant subspace of $A$ associated to the eigenvalues inside the unit circle. 

We may generalize further this formula to all $A$ without unimodular eigenvalues, removing the diagonalizability of $A$ from the requirements. Indeed, for a non-diagonalizable $A$, let us consider a sequence of matrices $A_k$, each of them diagonalizable, that converge uniformly to $A$. Such a sequence exists because diagonalizable matrices are dense in $\mathbb{C}^{n\times n}$. Since the integrand function is olomorphic on the integration contour, limits and derivatives can be moved inside the integral. This shows that $\Pi_A$ is an analytical function of $A$.

In particular, we apply the formula for $A$ equal to the $2\bar{d}\times 2\bar{d}$ matrix $P$ in \eqref{defP}, for which $\Pi_P$ has rank $\bar{d}$ due to the eigenvalue pairing. Let us take any $2\bar{d}\times \bar{d}$ matrix $V$ such that $\Pi_P V$ has full rank and spans the range $\im \Pi_P$. Since $\m{I\\B^T}$ is another basis for the same subspace, it follows that
\begin{equation}\label{BTformula}
B^T=\left(\m{0 & I_{\bar{d}}}\Pi_P V\right)\left(\m{I_{\bar{d}} & 0}\Pi_P V\right)^{-1}.
\end{equation}
Since invertibility is a condition that holds on an open domain, \eqref{BTformula} holds locally with a constant $V$ and provides an analytical expression for $B$ in terms of $\Pi_P$ and thus of $P$. The matrix $P$ is in turn a function of $\Gamma_0$, $\Gamma_1$.

\section{Expressions for the derivatives}\label{appDer}

In this section, we give a computable form for the Jacobian of $F: (h,M_0,M_1,M_2) \mapsto (c,A,B)$, the function considered in Theorem~\ref{Normality}. Rather than using vectorization to give an unwieldy matrix expression, we focus on describing its action as a linear map that takes a first-order perturbation of the moments (denoted by $(\dot{h},\dot{M}_0,\dot{M}_1,\dot{M}_2)$) to one of the parameters $(\dot{c},\dot{A},\dot{B})$. We shall use several times the expression for the derivative of the matrix inverse $\frac{d}{dt}(M^{-1}) = -M^{-1}(\frac{d}{dt}M) M^{-1}$.

The relation between $(\dot{h},\dot{M}_0,\dot{M}_1,\dot{M}_2)$ and $\dot{\Gamma}_0,\dot{\Gamma}_1$ is easy to compute, by simply differentiating \eqref{Gammas}:
\begin{equation}\label{der1}
\begin{aligned}
 \dot{\Gamma}_1 &= \dot{M}_1 - \dot{\Phi} M_0 - \Phi \dot{M}_0,\\
 \dot{\Gamma}_0 &= \dot{M}_0- \dot{M}_1\Phi^T-M_1 \dot{\Phi}^T - \dot{\Phi} M_1^T - \Phi \dot{M}_1^T + \dot{\Phi} M_0 \Phi^T + \Phi \dot{M}_0\Phi^T + \Phi M_0 \dot{\Phi}^T,
\end{aligned}
\end{equation}

with $\dot{\Phi} = \dot{M}_2 M_1^{-1} - M_2 M_1^{-1}\dot{M_1}M_1^{-1} = \dot{M}_2 M_1^{-1} - \Phi\dot{M_1}M_1^{-1}$, obtained by differentiating \eqref{Phi} for $k=1$.

We now differentiate \eqref{nme} to obtain
\begin{equation}\label{der2}
 \dot{\Sigma} - B \dot{\Sigma}B^T  = \dot{\Sigma} - \Gamma_1 \Sigma^{-1} \dot{\Sigma}\Sigma^{-1}\Gamma_1^T = \dot{\Gamma_0} - \dot{\Gamma}_1\Sigma^{-1}\Gamma_1^T - \Gamma_1 \Sigma^{-1}\dot{\Gamma}_1^T = \dot{\Gamma_0} + \dot{\Gamma}_1 B^T + B\dot{\Gamma}_1^T.
\end{equation}

This is a discrete-time Lyapunov equation (see for instance \cite{Lyap}) for $\dot{\Sigma}$, which can be solved in closed form by vectorization or numerically by procedures such as Matlab's \texttt{dlyap}. The equation is uniquely solvable since we are assuming that $\rho(B)<1$.

Once we have $\dot{\Sigma}$, we differentiate $B=-\Gamma_1 \Sigma^{-1}$ to obtain 
\begin{equation}\label{der3}
 \dot{B}=-\dot{\Gamma}_1 \Sigma^{-1} + \Gamma_1 \Sigma^{-1}\dot{\Sigma}\Sigma^{-1} = -(\dot{\Gamma}_1+B\dot{\Sigma}) \Sigma^{-1}.
\end{equation}

The derivatives of the remaining two parameters are given by $\dot{A}=\dot{\Phi} - \dot{B}$ and $\dot{c}=(I-\dot{\Phi})h + (I-\Phi)\dot{h}$.

Putting together \eqref{der1}, \eqref{der2}, \eqref{der3}, one can get to an expression for $(\dot{c},\dot{A},\dot{B})$ as a function of $(\dot{h},\dot{M}_0,\dot{M}_1,\dot{M}_2)$. It does not look like there are any significant simplifications in the resulting expressions. The main message to infer from this computation is that the norm of $(I-B\otimes B)^{-1}$, which appears when solving the discrete-time Lyapunov equation, has an impact on the magnitude of the derivatives; the closer $B$ is to having unimodular eigenvalues, the more ill-conditioned the solution becomes. 

\bibliographystyle{plainnat}
\bibliography{estimator}

\end{document}